\def\card{{{\operatorname{card}}}}
\def\tr{{{\operatorname{tr}}}}
\numberwithin{equation}{section}
\theoremstyle{plain}
\newtheorem{theorem}[equation]{Theorem}
\newtheorem{proposition}[equation]{Proposition}
\newtheorem{lemma}[equation]{Lemma}
\newtheorem*{(DQ1)}{(DQ1)}
\theoremstyle{definition}
\theoremstyle{remark}
\begin{document}
\title{Excluding words from Dyck shifts}
\author{Kokoro Inoue}
\author{Wolfgang Krieger}
\begin{abstract} We study subshift that arise by excluding words of length two from Dyck shifts. 
The words that are to be excluded are taken from a finite set that is not literal-uniform.
 \end{abstract}
 
\maketitle

\section{Introduction}
Let $\Sigma$ be a finite alphabet, and let $S$ be the shift 
on the shift space $\Sigma^{\Bbb Z}$,

$$
S((x_{i})_{i \in \Bbb Z}) =  (x_{i+1})_{i \in \Bbb Z}, 
\qquad 
(x_{i})_{i
\in \Bbb Z}  \in \Sigma^{\Bbb Z}.
$$
An 
$S$ -invariant closed subset $X$ of $\Sigma^{\Bbb Z}$ is called a subshift. For an introduction to the theory 
of subshifts see \cite {Ki} or  \cite{LM}.
A word is called admissible for the subshift $X \subset  \Sigma^{\Bbb Z}$ if it appears in a point of $X$.  We  denote the language of admissible words of a the subshift $X \subset  \Sigma^{\Bbb Z}$by $\mathcal L (X)$. 
A basic class of subshifts are the subshifts of finite type. A subshift of finite type is  constructed from a finite set $\mathcal F$ of words in the alphabet $\Sigma$ as the subshift that contains the points in $\Sigma^{\Bbb Z}$, in which no word in $\mathcal F$ appears. More generally, a subshift $X\subset\Sigma^{\Bbb Z}$ and 
a finite set $\mathcal F \subset \mathcal L (X)$ determines a subshift $X(\mathcal F)$ that contains the points in $X$ in which no word in $\mathcal F$ appears. We say that the subshift $X(\mathcal F)$ arises from the subshift $X$ by excluding words. In this paper we study subshifts that arise from Dyck shifts by excluding words of length two..
To recall the construction of the Dyck shifts, let $N> 1$, and let $\alpha^-(n), \alpha^+(n), 0 \leq n < N,$ be the generators of the Dyck inverse monoid \cite {NP} $\mathcal D_N$ with the rules
$$
\alpha^-(n) \alpha^+(n^\prime) =
\begin{cases}
1, &\text{if  $n = n^\prime$}, \\
0, &\text {if $n \neq n^\prime$}.
\end{cases}
$$
The Dyck shifts  are defined as the subshifts
$$
D_N \subset( \{ \alpha^-(n): 0 \leq n < N \} \cup\{ \alpha^+(n):0 \leq n < N \})^\Bbb Z
$$
with the admissible words $(\sigma_i)_{1 \leq i \leq I  } , I \in \Bbb N,$ of $D_N, N > 1,$ given by the condition 
$$
\prod_{1 \leq i \leq I } \sigma_i \neq 0.
$$
We denote by $C_N(n)$  the code that contains the words in $\mathcal L(D_N)$ that have $\alpha^-(n)$ as their  first symbol and  $\alpha^+(n)$ as their last symbol, and that have no proper prefix with $\alpha^+(n)$ as the last symbol, or, equivalently, that have no proper suffix with $\alpha^-(n)$ as the first symbol, $0 \leq n < N$.
The Dyck shift $D_N$ can also be defined as the coded system \cite {BH} of the Dyck code $\cup_{0 \leq n<N}C_N(n)$.
In \cite{HI} a necessary and sufficient condition was given for the existence of an embedding of an irreducible subshift of finite type into a Dyck shift. In \cite {HIK} this result was extended to a wider class of target shifts that have presentations that were constructed by using  a graph inverse semigroup $\mathcal S$. These presentations were called  $\mathcal S$-presentations. The Dyck inverse monoids occupy a central place among the graph inverse semigroups.
With  the semigroup $\mathcal D_N^-$ ($\mathcal D_N^+$) that is generated by $ \{ \alpha^-(n): 0 \leq n < N \} $ $ ( \{ \alpha^+(n): 0 \leq n < N \} )$, 
$\mathcal D_N$-presentations can be described as arising from a finite irreducible directed labelled graph with vertex set $\mathcal V$ and edge set $\Sigma$ and a label map $\lambda$, such that 
$$
\lambda(\sigma) \in \mathcal D_N^- \cup \{1\} \cup\mathcal D_N^+.
$$
Extending the label map to paths 
$
b = (b_i)_{1\leq i \leq I}, I > 1,
$
in the directed graph 
by setting
$
\lambda (b) = \prod_{1\leq i \leq I} \lambda(b_i),
$
the admissible words of the $\mathcal D_N$-presentations are the paths $b$ in the directed graph that satisfy  the condition
$
\lambda (b) \neq 0.
$
It is required that one has for  $U, W \in \mathcal V$, and for $\beta \in \mathcal D_N,$ that in  the directed graph there is  a path $b$ from $U$ to $W$ such that $\lambda(b) =Ê\beta$. 
A periodic point $p =(p_i)_{i \in \Bbb Z}$ of a $\mathcal D_N$-presentation is said to have non-positive (non-negative) multplier, if, with $\Pi(p)$ the period of $p$ there exists an $i \in \Bbb Z$ such that $\lambda( (p_j)_{i\leq j < i+\Pi(p)})\in$ $\mathcal D_N^-\cup \{ \bold 1 \}$$ (\{ \bold 1 \}\cup \mathcal D_N^+)$.
Among the invariants, that determine the existence of an embedding of a given irreducible subshift of finite type into a $\mathcal D_N$-presentation, are periodic point counts and entropies that are associated to the periodic points of  target shift with non-positive or non-negative multipliers. 
Examples of $\mathcal D_N$-presentations can be obtained by excluding finitely many words from Dyck shifts. The examples in Section 4 of \cite {HIK} were constructed in this way. This then leads to the problem of determining zeta functions  that are associated to subshifts that are obtained by  excluding from a Dyck shift $D_N$ the words in a finite set $\mathcal F$ of  $D_N$-admissible words. In studying this problem one is lead to make a distinction according to the nature of the set $\mathcal F$. In \cite {IK} a $D_N$-admissible word was called literal-non-positive (literal-non-negative) if all of its symbols are in
$\{       \alpha^-(n):0 \leq n < N       \}$ ( $\{        \alpha^+(n): 0 \leq n < N       \}  )$, and a set of  $D_N$-admissible words was called literal uniform if all of its words are  literal-non-positive or literal-non-negative. In \cite {IK} we considered subshifts that are obtained by excluding from a Dyck shift $D_N$ the words in a finite literal-uniform set of  $D_N$-admissible words, 
and in this paper we consider subshifts that are obtained by excluding from a Dyck shift $D_N$ the words in a  set  of  $D_N$-admissible words of length two that is not literal-uniform (literal non-uniform). More generally, we study subshifts that arise by excluding words from certain subshifts that belong to a class of subshifts, that are constructed from an finite index set $  \Gamma$ and a relation $\sim$ on $\Gamma$, with the Dyck shifts as special cases. This wider class of subshifts contains the subshifts $X(  \Gamma,  \sim )$  with alphabet
 $$
\{\alpha^-(\gamma): \gamma \in \Gamma\} \cup\{\alpha^+(\gamma): \gamma \in \Gamma\}
$$
 and admissible words  $(\sigma_i)_{1 \leq i \leq I  } $, that are given, with the rules
 $$
\alpha^-(\gamma) \alpha^+(\gamma^\prime) =
\begin{cases}
1, &\text{if  $\gamma \sim \gamma^\prime$}, \\
0, &\text {if $\gamma\nsim \gamma^\prime$},
\end{cases}
$$
by the condition 
$$
\prod_{1 \leq i \leq I } \sigma_i \neq 0,
$$
(see \cite [Section 4]{HK}). The notions of multiplier, and of a literal-uniform set and literal-non-uniform set of words, for the subshifts $X(  \Gamma,  \sim )$ are analogous to the ones for the Dyck shifts.
   
In this paper, more specifically, we  let $N >1$, choose
$M_n \in \Bbb N, 0 \leq n < N$, set
$$
\Gamma = \bigcup _{0 \leq n <N}\{ (n, m): 1 \leq m \leq M_n  \},
$$
and use the relation $\sim$, where
$(n,m)\sim(n^\prime,m^\prime)$ means that $n = n^\prime$, denoting the resulting subshift $X(  \Gamma,  \sim )$ by $X_{N}((M_n)_{0\leq  n <N})$
 (The subshift $X_{N}((1)_{0\leq  n <N})$ is $D_N$). Moreover,
we choose  sets
$$
\mathcal A^-_{n, n^\prime}\subset [1, M_{n^\prime}],  \ \
 \mathcal A_{n, n^\prime}\subset [1, M_{n^\prime}], \ \
\mathcal A^+_{n, n^\prime}\subset [1, M_{n^\prime}], \  \  \qquad  0 \leq n,n^\prime < N,
$$
and set
\begin{align*}
\mathcal F((M_n)_{0 \leq n < N},(&\mathcal A^-_{n, n^\prime},\mathcal A_{n, n^\prime},\mathcal A^+_{n, n^\prime})_{Ú0 \leq n,n^\prime < N} )=\\
\bigcup_{ 0 \leq n, n^\prime < N } \bigcup_{ 1 \leq m \leq M_n }
(&\{\alpha^-(n, m) \alpha^-(n^\prime, m^\prime): m^\prime \notin \mathcal A^-_{n, n^\prime}\}\cup\\
&\{ \alpha^+(n, m) \alpha^-(n^\prime, m^\prime ):
m^\prime \notin \mathcal A _{n, n^\prime} \}\cup\\
&\{\alpha^+(n, m)  \alpha^+(n^\prime, m^\prime):m^\prime \notin \mathcal A^+_{n, n^\prime}\}).
\end{align*}
 We also set
 \begin{multline*}
 X((M_n)_{0 \leq n < N},(\mathcal A^-_{n, n^\prime},\mathcal A_{n, n^\prime},\mathcal A^+_{n, n^\prime})_{Ú0 \leq n,n^\prime < N}) =\\
 X_{N}((M_n)_{0\leq  n <N})(\mathcal F((M_n)_{0 \leq n < N},(\mathcal A^-_{n, n^\prime},\mathcal A_{n, n^\prime},\mathcal A^+_{n, n^\prime})_{Ú0 \leq n,n^\prime < N} )).
\end{multline*}
(There are procedures to decide if these subshifts
are empty, or of finite type, or topologically intransitive.)
By the use of circular Markov codes and by applying a formula of Keller \cite {Ke}, we obtain in Section 2 an expression for the zeta function of the subshifts
$
 X((M_n)_{0 \leq n < N},(\mathcal A^-_{n, n^\prime},\mathcal A_{n, n^\prime},\mathcal A^+_{n, n^\prime})_{Ú0 \leq n,n^\prime < N}).
$

In section 3 we consider the case $N=2$. The equations for the generating functions of the two essential  circular Markov codes of the subshift are in this case of degree at most four, and in the case, that the two generating functions are equal, these equations are of degree at most three. The recursive structure of the code words allows to
show that the two generating functions are the same, provided the number of words of length two, four and six, that the codes contain, are the same. 

In section 4 we specialize to the case of the Dyck shift $D_2$. We  determine the literal non-uniform sets  $\mathcal F$ of words of length two, such that the subshifts, that arise from $D_2$  by removing the word in $\mathcal F$, have a 2-block system that yields an  $D_2$-presentation, such that the two essential  circular Markov codes have the same generating function.

\section{Subsystems of $X((M_n)_{0 \leq n < N})$}
 
 We denote the generating function of  a set  $\mathcal C$ of words
in the symbols of a finite alphabet by $g( \mathcal C )$.

We recall from  \cite{Ke} the notion of a circular Markov code to the extent that is needed here.
We let a Markov code  be given by a nonempty set  $\mathcal C$ of words
in the symbols of a finite alphabet $\Sigma$
together with a finite set ${\mathcal V}$, a 0\thinspace-1 transition matrix $B=(B( U, W)_{U, W \in \mathcal V}$ and mappings
$r:\mathcal C \to\mathcal  V, s:\mathcal C \to\mathcal V$.
To
$(\mathcal C,r,s)$ there is associated the shift invariant set 
$X_\mathcal C \subset \Sigma^{\Bbb Z}$
of points $x \in \Sigma^{\Bbb Z}$ such that 
there are indices 
 $I_k, k \in {\Bbb Z},$ such that 
$$
I_0 \le 0 < I_1,\quad  I_k < I_{k+1}, \quad k \in {\Bbb Z}, 
$$
and such that
\begin{align*}
x_{[I_k,I_{k+1})} \in \mathcal C, \qquad k \in {\Bbb Z},  \tag {2.1}
\end{align*}
and
\begin{align*}
B(r(x_{[I_k,I_{k+1})}) ,s(x_{[I_{k+1},I_{k+2})}))= 1, \qquad k \in {\Bbb Z}.  \tag {2.2}
\end{align*}
$(\mathcal C, r,s,B)$ is said to be a circular Markov code
if for every periodic point $x$ in $X_\mathcal C$ 
the indices $I_k, k \in {\Bbb Z},$ 
such that  (2.1)
and  (2.2) hold,
are uniquely determined by $x$.
Given a circular Markov code
$(\mathcal C, s, r,A)$ denote by
$
\mathcal C(U,W)
$
the set of words $c \in \mathcal C$ such that
$s(c) = U$,  $r(c) = W,U,W \in {\mathcal V}.$
Introduce the matrix
$$
H^{(\mathcal C)}(z) =
 (B(U, W)g_{\mathcal C (U,W)}(z))_{U,W \in {\mathcal V}}.
$$
 
 \begin{lemma} 
For a circular Markov code $(\mathcal C, s, r ,B),$  
\begin{align*}
\zeta_{X_\mathcal C}(z) = \det ( I - H^{(\mathcal C)}(z))^{-1}. 
\end{align*}
\end{lemma}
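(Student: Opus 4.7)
The plan is to verify the claim by comparing logarithms of both sides as formal power series in $z$. The identity
\begin{align*}
\log\det\bigl(I - H^{(\mathcal C)}(z)\bigr)^{-1} = \sum_{\ell \geq 1} \frac{1}{\ell}\tr\bigl(H^{(\mathcal C)}(z)^\ell\bigr)
\end{align*}
together with the standard expansion $\log\zeta_{X_\mathcal C}(z) = \sum_{n \geq 1} n^{-1}\,\#\{x \in X_\mathcal C : S^n x = x\}\, z^n$ reduces the lemma to matching these two series coefficient by coefficient.

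The second step is to interpret $\tr\bigl(H^{(\mathcal C)}(z)^\ell\bigr)$ combinatorially. Expanding the matrix product, the coefficient of $z^n$ equals the number of ordered cyclic tuples $(c_0, \ldots, c_{\ell-1})$ of codewords from $\mathcal C$ with $\sum_j |c_j| = n$ that are circularly admissible in the sense of (2.1)--(2.2). The uniqueness clause in the definition of a circular Markov code then yields a correspondence: each such cyclic tuple biinfinitely concatenates to a periodic point $x \in X_\mathcal C$ with $S^n x = x$, and conversely every such point is recovered from its uniquely determined index sequence $(I_k)$ after choosing one of the $n$ positions within one period as the origin. The factor $n/\ell$ that accounts for the $\ell$-fold rotational redundancy of a length-$\ell$ tuple, weighed against the $n$ choices of origin, matches the discrepancy between $1/n$ on the left and $1/\ell$ on the right.

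The principal obstacle is handling periodic points whose canonical codeword tuple has a proper rotational symmetry. The clean route is to organise everything orbit by orbit: for an orbit of least period $p$ in $X_\mathcal C$ whose codeword decomposition has intrinsic period $\ell_0$, one checks that its total contribution to both sides equals $-\log(1 - z^p)$. Uniqueness of the Markov decomposition is exactly what guarantees that each periodic point is counted exactly once on the left, so once the bijection of the previous step is in hand the remaining bookkeeping is a routine rearrangement of formal power series, in the spirit of the standard proof of the Bowen--Lanford trace formula specialised here to the circular form packaged by Keller.
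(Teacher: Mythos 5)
Your outline is essentially correct, but note that the paper does not actually prove this lemma: its entire proof is the single sentence ``This is a variant of a special case of a formula of Keller [Ke].'' So what you have written is not an alternative route so much as a reconstruction of the argument the paper outsources, and it is the standard one: expand $\log\det(I-H^{(\mathcal C)}(z))^{-1}$ as $\sum_{\ell\ge 1}\ell^{-1}\tr(H^{(\mathcal C)}(z)^{\ell})$, identify the coefficient of $z^{n}$ in $\tr(H^{(\mathcal C)}(z)^{\ell})$ with cyclically admissible $\ell$-tuples of codewords of total length $n$, and use the uniqueness clause in the definition of a circular Markov code to match these against fixed points of $S^{n}$. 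One caution about your middle paragraph: the ``$n$ choices of origin'' and ``$\ell$-fold rotational redundancy'' bookkeeping is not right as stated, since only the $\ell_{0}$ cut positions per least period (not all $n$ positions) are admissible origins, and the rotational redundancy of the fundamental tuple is $\ell_{0}$ (trivial symmetry group, because a nontrivial rotational symmetry of the tuple would force a period smaller than $p$), not $\ell$. Your final paragraph, which organises the count orbit by orbit and checks that an orbit of least period $p$ contributes $-\log(1-z^{p})$ to both sides, is the correct way to do it and supersedes that loose statement; with that paragraph made precise the proof is complete.
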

\begin{proof}
This is a variant of a special case of a formula of Keller \cite {Ke}.
\end{proof}
We return to the subshifts
$
X_N((M_n)_{0 \leq n < N},(\mathcal A^-_{n, n^\prime},\mathcal A_{n, n^\prime},\mathcal A^+_{n, n^\prime})_{Ú0 \leq n,n^\prime < N}).
$

 \begin{proposition} 
 Let $N>1$, and $M_n\in \Bbb N,  0 \leq n < N$, and let
  \begin{multline*}
\mathcal A^-_{n, n^\prime},\mathcal A^-_{n, n^\prime}\subset [1, M_{n^\prime}],  \ \
 \mathcal A_{n, n^\prime}, \mathcal A_{n, n^\prime}\subset [1, M_{n^\prime}],  \ \
\mathcal A^+_{n, n^\prime},\mathcal A^+_{n, n^\prime}\subset [1, M_{n^\prime}],  \\
 0 \leq n,n^\prime < N, 
 \end{multline*}
 be such that
 \begin{multline*}
 \card  \  \mathcal A^-_{n, n^\prime} = \card  \ \widetilde {\mathcal A}^-_{n, n^\prime} ,
 \card  \  \mathcal A_{n, n^\prime} = \card  \ \widetilde {\mathcal A}_{n, n^\prime} ,
 \card  \  \mathcal A^+_{n, n^\prime} = \card  \ \widetilde{ \mathcal A}^+_{n, n^\prime} , \\
  0 \leq n,n^\prime < N.
 \end{multline*}
  Then the subshifts
  $$
X_N((M_n)_{0 \leq n < N},(\mathcal A^-_{n, n^\prime},\mathcal A_{n, n^\prime},\mathcal A^+_{n, n^\prime})_{Ú0 \leq n,n^\prime < N})
$$
and
$$
X_N((M_n)_{0 \leq n < N},\widetilde {\mathcal A}^-_{n, n^\prime},\widetilde {\mathcal A}_{n, n^\prime},\widetilde {\mathcal A}^+_{n, n^\prime})_{Ú0 \leq n,n^\prime < N})
$$
are topologically conjugate.
\end{proposition}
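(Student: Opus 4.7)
The plan is to construct an explicit $2$-block sliding block conjugacy between the two subshifts, exploiting the observation that admissibility is determined entirely by the ``sign-$n$ labels'' of the symbols (the pair $(\varepsilon, n)$ with $\varepsilon\in\{-,+\}$ and $0\le n<N$), while the $m$-coordinate enters admissibility only via membership in the sets $\mathcal A^-$, $\mathcal A$, $\mathcal A^+$, and these sets in turn depend only on the sign-$n$ label of the preceding symbol.

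First I would record two structural facts. (a) For an admissible word $\sigma_1\sigma_2$ in $X(\mathcal A)$, the constraint on the $m$-coordinate of $\sigma_2$ imposed by $\mathcal F$ is precisely membership in one of $\mathcal A^-_{n_1,n_2}$, $\mathcal A_{n_1,n_2}$, or $\mathcal A^+_{n_1,n_2}$, depending on the signs of $\sigma_1$ and $\sigma_2$, and does not involve the $m$-coordinate of $\sigma_1$. (b) The Dyck-type admissibility condition $\prod_i \sigma_i\ne 0$ depends only on the sign-$n$ sequence, since in $X_N((M_n))$ the multiplication rule gives $\alpha^-(n,m)\alpha^+(n',m')=\mathbf 1$ when $n=n'$ and $0$ otherwise, independently of $m$ and $m'$.

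Using the equal-cardinality hypothesis I would then choose bijections $\pi^-_{n,n'}:\mathcal A^-_{n,n'}\to\widetilde{\mathcal A}^-_{n,n'}$, $\pi_{n,n'}:\mathcal A_{n,n'}\to\widetilde{\mathcal A}_{n,n'}$, and $\pi^+_{n,n'}:\mathcal A^+_{n,n'}\to\widetilde{\mathcal A}^+_{n,n'}$ for each $0\le n,n'<N$, and define $\phi(x)_i=\psi(x_{i-1},x_i)$ by the rule $\psi(\alpha^{\varepsilon_1}(n_1,m_1),\alpha^{\varepsilon_2}(n_2,m_2))=\alpha^{\varepsilon_2}(n_2,\pi^{\varepsilon_1\varepsilon_2}_{n_1,n_2}(m_2))$, with $\pi^{--}=\pi^-$, $\pi^{+-}=\pi$, $\pi^{++}=\pi^+$, and $\pi^{-+}$ the identity (the only Dyck-admissible $\alpha^-\alpha^+$ case is $n_1=n_2$, and no $\mathcal F$-exclusion applies to $\alpha^-\alpha^+$ words).

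It then remains to verify that $\phi$ is a conjugacy. Preservation of the sign-$n$ sequence under $\phi$, combined with (b), shows that all Dyck conditions continue to hold on $\phi(x)$; (a) together with the choice of bijections shows that each length-$2$ window of $\phi(x)$ avoids $\widetilde{\mathcal F}$, so $\phi(X(\mathcal A))\subset X(\widetilde{\mathcal A})$. The $2$-block code built from $(\pi^-)^{-1}$, $\pi^{-1}$, $(\pi^+)^{-1}$ yields an inverse sliding block code, well defined because the sign-$n$ label of $\phi(x)_{i-1}$ agrees with that of $x_{i-1}$, which lets us recover the correct bijection to invert. The point requiring the most care is that no higher-order Dyck obstruction (bracket mismatch across arbitrarily long blocks) can appear in $\phi(x)$ even though the short-range data has been relabeled; this is handled uniformly by (b), which reduces Dyck admissibility to a property of the sign-$n$ skeleton that $\phi$ leaves invariant.
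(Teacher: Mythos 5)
Your proof is correct and follows essentially the same route as the paper: a $2$-block recoding that replaces the $m$-coordinate of each symbol according to a bijection determined by the sign-$n$ data of the preceding symbol (the paper uses permutations of $[1,M_{n'}]$ carrying $\mathcal A^{\pm}_{n,n'}$ onto $\widetilde{\mathcal A}^{\pm}_{n,n'}$, which amounts to the same thing as your bijections between the sets). Your explicit verification that the Dyck/bracket admissibility depends only on the sign-$n$ skeleton, and that the inverse is again a $2$-block code, fills in details the paper leaves implicit.
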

 \begin{proof}
 With permutations
 $
 \Psi^-_{n, n^\prime},
 \Psi_{n, n^\prime},
 \Psi^+_{n, n^\prime}, 0 \leq n < N,
 $ 
 of $[1, M_{n^\prime}], \  0 \leq n^\prime < N$, such that
 \begin{multline*}
 \Psi^-(n, n^\prime)(\mathcal A^-_{n, n^\prime} ) = \widetilde {\mathcal A}^-_{n, n^\prime}, \
 \Psi(n, n^\prime)( \mathcal A_{n, n^\prime}) =\widetilde {\mathcal A}_{n, n^\prime}, \
 \Psi^+(n, n^\prime)( \mathcal A^+_{n, n^\prime}) =\widetilde {\mathcal A}^+_{n, n^\prime},\\
 0 \leq n,n^\prime < N,
 \end{multline*}
a topological conjugacy of \
 $
X_N((M_n)_{0 \leq n < N},(\mathcal A^-_{n, n^\prime},\mathcal A_{n, n^\prime},\mathcal A^+_{n, n^\prime})_{Ú0 \leq n,n^\prime < N})
$
\ onto 
$
X_N((M_n)_{0 \leq n < N},\widetilde {\mathcal A}^-_{n, n^\prime},\widetilde {\mathcal A}_{n, n^\prime},\widetilde {\mathcal A}^+_{n, n^\prime})_{Ú0 \leq n,n^\prime < N})
$
is given by the mapping that replaces in a point of \
 $
X_N((M_n)_{0 \leq n < N},(\mathcal A^-_{n, n^\prime},\mathcal A_{n, n^\prime},\mathcal A^+_{n, n^\prime})_{Ú0 \leq n,n^\prime < N})
$ \
a symbol
$\alpha^-(n^\prime,m^\prime)$, if preceded by the symbol
$\alpha^-(n, m)$, in which case $m^\prime\in \mathcal A^-_{n, n^\prime}   $,
by the symbol $\alpha^-(n^\prime, \Psi^-_{n, n^\prime}(m^\prime))$,
a symbol
$\alpha(n^\prime,m^\prime)$, if preceded by the symbol
$\alpha(n, m)$, in which case $m^\prime\in \mathcal A_{n, n^\prime}   $,
by the symbol $\alpha(n^\prime, \Psi^-_{n, n^\prime}(m^\prime))$,
a symbol
$\alpha^+(n^\prime,m^\prime)$, if preceded by the symbol
$\alpha^+(n, m)$, in which case $m^\prime\in \mathcal A^+_{n, n^\prime}   $,
by the symbol $\alpha^+(n^\prime, \Psi^-_{n, n^\prime}(m^\prime))$,
 \end{proof}

Setting
\begin{multline*}
A^-(n,n^\prime)= \card  \  \mathcal A^-_{n, n^\prime}, \ \ A(n,n^\prime)= \card  \  \mathcal A_{n, n^\prime}, \ \ A^+(n,n^\prime)= \card  \  \mathcal A^+_{n, n^\prime},\\ 0\leq n,n^\prime < N,
\end{multline*}
we introduce matrices
$$
A^- = (A^-(n,n^\prime))_{0\leq n,n^\prime < N}, A = (A(n,n^\prime))_{0\leq n,n^\prime < N},  A^+ = (A^+(n,n^\prime))_{0\leq n,n^\prime < N}.
$$
In view of Proposition (2.2) we will will write 
$X_N((M_n)_{0 \leq n < N},A^-,A,A^+) $ for
$X_N((M_n)_{0 \leq n < N},(\mathcal A^-_{n, n^\prime},\mathcal A_{n, n^\prime},\mathcal A^+_{n, n^\prime})_{Ú0 \leq n,n^\prime < N})$. Alternatively, in case one wants to be more specific, one can let
$X_N((M_n)_{0 \leq n < N},A^-,A,A^+) $ denote the subshift 
$$
X_N((M_n)_{0 \leq n < N},([1,A^-_{n, n^\prime}],[1,\ A_{n, n^\prime}],[1, A^+_{n, n^\prime}])_{Ú0 \leq n,n^\prime < N}).
$$
Note that this construction carries the restriction
$$
A^-(n,n^\prime), A(n,n^\prime),A^+(n,n^\prime) \leq M_{n^\prime}, \quad 0\leq n,n^\prime < N.
$$

Given a subshift $X_N((M_n)_{0 \leq n < N},A^-,A,A^+) $,
we set
$$
\rho(\alpha^-(n, m   ) ) = 1,\quad  \rho(\alpha^+(n, m   ) ) = -1,\qquad 1 \leq m  \leq M_n,\quad 0 \leq n < N,
$$
and we let 
$$
\mathcal C_{(M_n)_{0 \leq n < N},A^-,A,A^+}(n, m ),\qquad 1 \leq m  \leq M_n,\quad0 \leq n < N,
$$
denote the circular Markov code that contains the words 
$$
c= (c_i)_{1 \leq i \leq 2I}
\in
\mathcal L (X_N((M_n)_{0 \leq n < N},A^-,A,A^+) ), \quad I \in \Bbb N,
$$
such that
$$
c_1 =\alpha^-(n, m   ) , 
$$
and
$$
\sum_{1 \leq i \leq 2I} \rho (c_i) = 0,
$$
$$
\sum_{1 \leq i \leq 2J} \rho (c_i) >0 ,\quad 1 < J < I.
$$
Setting
$$
V = \{ (n, m): 0 \leq n < N, 1 \leq m  \leq M_n \},
$$
the range and the source map are given here by setting for
$$
c= (c_i)_{1 \leq i \leq 2I}
\in
\mathcal C_{(M_n)_{0 \leq n < N},A^-,A,A^+}(n, m ), \quad I \in \Bbb N,
$$
if $  c_{2I}\in \{ \alpha^+(  n, m), 1 \leq m \leq M_n \}, $ $r(c)$ equal to $(n,1)   $, and if $  c_{1}= \alpha^-(  n, m)  $, setting 
$ s(c) $  equal to $  (n, m),1 \leq m \leq M_n,$
with the transition matrix  given by
$$
B((n, m),(  n^\prime, m^\prime) ) =
\begin{cases} 1, &\text {if $m^\prime\in \mathcal A_{n, n^\prime},$} \\
0  , &\text{if $m^\prime\notin \mathcal A_{n, n^\prime},$}  
\end{cases}  \quad0 \leq n, n^\prime < N.
$$
 Note that replacing the first symbol $\alpha^-(n, 1) $ of the words in the code 
 $$\mathcal C_{(M_n)_{0 \leq n < N},A^-,A,A^+}(n, 1 )$$ by the symbol $\alpha^-(n, m) $ yields  the words in the code 
 $$
 \mathcal C_{(M_n)_{0 \leq n < N},A^-,A,A^+}(n, m ), 0 \leq n < N, 1<m\leq M.
 $$
 
Writing
$g_n( (M_n)_{0 \leq n < N},A^-, A,A^+), 0\leq n < N,$ for
$$g( \mathcal C_{(M_n)_{0 \leq n < N}, A^-, A ,A^+}(n, m)),  \ 1 \leq m \leq M_n,
$$
we denote by $G_{(M_n)_{0 \leq n < N}, A^-, A,A^+}$ the diagonal matrix with the diagonal elements 
$$
g_n((M_n)_{0 \leq n < N}, A^-, A,A^+),\qquad0\leq n < N.
$$

 \begin{lemma} 
\begin{multline*}
g_n ((M_n)_{0 \leq n < N} A^-, A,A^+)=\\
 z^2(M_n\bold 1 + A^-{(\bold 1 -
 G_{(M_n)_{0 \leq n < N} A^-, A,A^+}A)^{-1}}
G_{(M_n)_{0 \leq n < N} A^-, A,A^+}A^+)_{n,n},\\0 \leq  n < N.
\end{multline*}

\end{lemma}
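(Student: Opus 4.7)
The plan is to verify the formula by a standard first-return (``Dyck arch'') decomposition of the code words. Given $c = (c_i)_{1 \leq i \leq 2I} \in \mathcal C_{(M_n)_{0 \leq n < N}, A^-, A, A^+}(n, m)$, the fact that $c_1 = \alpha^-(n, m)$ raises $\rho$ from $0$ to $1$, together with the positivity of all intermediate even partial sums and the return to $0$ at position $2I$, forces $c_{2I}$ to have the form $\alpha^+(n, m^*)$ and makes the interior block $c_2 \cdots c_{2I-1}$ a balanced word staying at height $\geq 1$. This is the only structural ingredient I would need.

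I would first dispose of the base case $I = 1$: a word $\alpha^-(n, m)\alpha^+(n, m'')$ is admissible in $X_N((M_n)_{0 \leq n < N}, A^-, A, A^+)$ for every $m'' \in [1, M_n]$, because $\mathcal F$ contains no pair of the form $\alpha^-\alpha^+$. This contributes exactly the $z^2 M_n$ term. For $I \geq 2$, I would invoke the unique factorisation of the interior $c_2 \cdots c_{2I-1}$ as a concatenation $w^{(1)} w^{(2)} \cdots w^{(r)}$, $r \geq 1$, of first-return-to-height-one sub-arches. By the remark preceding the lemma, each $w^{(j)}$ starting with some $\alpha^-(n_j, m_j')$ is in bijection (by relabelling its initial letter) with a word of $\mathcal C_{(M_n)_{0 \leq n < N}, A^-, A, A^+}(n_j, 1)$, and therefore contributes the generating function $g_{n_j}$, independent of $m_j'$.

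The pairwise exclusions encoded by $\mathcal F$ act only at three kinds of ``seams'' between the pieces: the $\alpha^-\alpha^-$ seam between $c_1$ and $w^{(1)}$ contributes a factor $A^-(n, n_1)$; each $\alpha^+\alpha^-$ seam between consecutive sub-arches $w^{(j)}, w^{(j+1)}$ contributes $A(n_j, n_{j+1})$; and the $\alpha^+\alpha^+$ seam between $w^{(r)}$ and $c_{2I}$ contributes $A^+(n_r, n)$. Summing over $r$ and the intermediate indices $n_1, \ldots, n_r$ produces
\[
z^2 \sum_{r \geq 1}(A^-(GA)^{r-1} G A^+)_{n,n} = z^2 (A^-(\mathbf{1} - GA)^{-1} G A^+)_{n,n},
\]
and adding the length-two contribution $z^2 M_n$ recovers the stated formula. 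The step requiring the most care is the verification that all two-letter conditions strictly inside each $w^{(j)}$ are automatically enforced by the recursive definition of $\mathcal C_{(M_n)_{0 \leq n < N}, A^-, A, A^+}(n_j, 1)$, so that after factorisation the only ``exposed'' two-letter transitions are the three seam types above; this is precisely what makes the clean matrix $(\mathbf{1} - GA)^{-1}$ appear instead of a more entangled expression.
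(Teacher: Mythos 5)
Your argument is correct and is exactly the transfer-matrix / first-return (``arch'') decomposition that the paper invokes in a single sentence (``The proof is by the transfer matrix method''); you have merely supplied the details the paper omits, namely the base case $z^{2}M_{n}$, the seam bookkeeping giving the factors $A^{-}(n,n_{1})$, $A(n_{j},n_{j+1})$, $A^{+}(n_{r},n)$, and the geometric series $\sum_{r\geq 1}(GA)^{r-1}=(\mathbf{1}-GA)^{-1}$. The only point worth flagging is that you (reasonably) read the paper's condition $1<J<I$ as $1\leq J<I$, i.e.\ you require \emph{all} proper even partial sums to be positive; this is clearly the intended definition, since it is what makes the factorisation into first-return sub-arches unique and the code circular.
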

\begin{proof} 
The proof is by the transfer matrix method.
\end{proof}

In proving the next proposition we follow \cite {Ke, I, KM, Kr}.

 \begin{proposition} 
\begin{multline*}
\zeta_{X_N((M_n)_{0 \leq n < N},A^-, A,A^+)}(z) =\\
\frac{\det(1 - G_{(M_n)_{0 \leq n < N} A^-, A,A^+}A)}{\det(\bold 1 - A^-z - 
G_{(M_n)_{0 \leq n < N} A^-, A,A^+}A)\det(\bold 1 - A^+z - G_{(M_n)_{0 \leq n < N} A^-, A,A^+}A)}.
\end{multline*}
\end{proposition}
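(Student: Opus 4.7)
The plan is to exploit the multiplier structure of periodic points, recalled in the introduction, to express the full zeta function as an inclusion-exclusion combination of three partial zeta functions. Every periodic point of $X_N((M_n)_{0 \leq n < N}, A^-, A, A^+)$ has $\lambda$-value over a period either in $\mathcal D_N^- \cup \{\mathbf 1\}$ (non-positive multiplier), in $\{\mathbf 1\} \cup \mathcal D_N^+$ (non-negative multiplier), or in both (exactly when the $\lambda$-value equals $\mathbf 1$, the bounded case), and the three corresponding generating functions $\zeta_-, \zeta_+, \zeta_{\mathrm{bd}}$ of periodic-point counts satisfy $\zeta = \zeta_- \zeta_+/\zeta_{\mathrm{bd}}$ by inclusion-exclusion on the overlap of the non-positive and non-negative classes with the bounded class.

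Each of $\zeta_-, \zeta_+, \zeta_{\mathrm{bd}}$ is the zeta function of a subsystem of $X_N((M_n)_{0 \leq n < N}, A^-, A, A^+)$ that can be presented as the shift space of an explicit circular Markov code in the sense of Lemma 2.1, built from the balanced excursions $\mathcal C_{(M_n)_{0 \leq n < N}, A^-, A, A^+}(n,m)$ of Lemma 2.3 and, as appropriate, the single-letter codewords $\alpha^-(n,m)$ or $\alpha^+(n,m)$. For the bounded case only the excursion codewords are used, and the transfer matrix of Lemma 2.1 equals $GA$, with $G = G_{(M_n)_{0 \leq n < N}, A^-, A, A^+}$ the diagonal matrix of Lemma 2.3; for the non-positive case one also adds the single-letter $\alpha^-(n,m)$ codewords, and the transfer matrix becomes $A^- z + GA$; the non-negative case is symmetric.

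The substantive step is to verify that these three codes are genuinely circular Markov, i.e.\ that every periodic point of the prescribed multiplier class has a unique decomposition into codewords in the sense recalled before Lemma 2.1. The cut rule follows the pattern of \cite{Ke, I, KM, Kr}: cuts are placed at positions where the running sum $\sum \rho$ attains a new extremum in the appropriate direction (the strict new-minimum positions for the non-positive case, the strict new-maximum positions for the non-negative case, and the global period-minima for the bounded case). Verifying uniqueness of cuts across these cases is where I expect the main obstacle to lie.

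Applying Lemma 2.1 to each of the three codes then yields
\begin{align*}
\zeta_-(z) &= \det(I - A^- z - GA)^{-1}, \\
\zeta_+(z) &= \det(I - A^+ z - GA)^{-1}, \\
\zeta_{\mathrm{bd}}(z) &= \det(I - GA)^{-1},
\end{align*}
and inserting these into $\zeta = \zeta_- \zeta_+/\zeta_{\mathrm{bd}}$ produces the claimed formula. Lemma 2.3 is used implicitly to confirm that $G$ is indeed the generating-function matrix for the excursion code, justifying its appearance in each of the three transfer matrices; once the three circular Markov codes are in place the remainder is a one-line algebraic combination.
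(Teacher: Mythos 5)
Your proposal is correct and follows essentially the same route as the paper: both decompose the periodic points into the non-positive-multiplier, non-negative-multiplier and neutral classes, compute the zeta function of each class by applying Lemma 2.1 to a circular Markov code built from the excursion codewords of Lemma 2.3 together with (for the signed classes) the single-letter codewords, and combine the three by inclusion--exclusion over the overlap, which is exactly the neutral class. The only divergence is that the paper's displayed intermediate expressions carry extra factors $\det(\mathbf{1}-A^{-}z)^{-1}$ and $\det(\mathbf{1}-A^{+}z)^{-1}$ (apparently typographical, since they are incompatible with the stated final formula and with a direct count of fixed points for $D_N$), whereas your intermediate formulas are the ones consistent with the Proposition; like the paper, you leave the circularity of the three codes asserted rather than verified.
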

\begin{proof}
Applying Lemma (2.1), one finds that the zeta function of the neutral periodic points is given by
$$
\det(1 - G_{(M_n)_{0 \leq n < N} A^-, A,A^+}A)^{-1},
$$
the zeta function of the sets of periodic points with non-positive  multiplier by
$$
\det(\bold 1 - A^-z))^{-1}
\det(\bold 1 - A^-z - 
G_{(M_n)_{0 \leq n < N} A^-, A,A^+}A)^{-1},
$$
and the zeta function of the sets of periodic points with  non-negative multiplier by
$$
\det(\bold 1 - A^+z)^{-1}
\det(\bold 1 - A^-z - 
\det(\bold 1 - A^+z - G_{(M_n)_{0 \leq n < N} A^-, A,A^+}A)^{-1}.
$$
Taking into account that the intersection of the sets of periodic points with non-positive multplier and non-negative multiplier ia the set of neutral periodic points,  one obtains the proposition. 
\end{proof}

To consider the case of constant assignments, let $M \in \Bbb N$. Let $M, K^- , K , K^+\in \Bbb N, K^- , K , K^+\leq M,$
and denote the common value of the generating functions 
$$
g(\mathcal C_{(M)_{1\leq m \leq M},( K^-)_{0\leq n, n^{\prime}< N},(K)_{0\leq n, n^{\prime}< N},(K^+)_{0\leq n, n^{\prime}< N}  },0 \leq n < N,
$$
by $g(M, K^-, K ,K^+)$.

 \begin{proposition} 
\begin{multline*}g(M, K^-, K ,K^+)(z) = \\
\frac{1}{2KN}[1 +(MK-K^-K^+)Nz^2-\sqrt{(1 +(MK-K^-K^+)Nz^2)^2- 4MKNz^2}],
\end{multline*}
\begin{multline*}
\zeta_{X_{M ,K^-, K ,K^+}}(z) =\\
 \frac{1-KNg(M, K^-, K ,K^+)}{(1-K^-Nz -KNg(M, K^-, K ,K^+)(z))(1-K^+Nz -KNg_{M ,K^-, K ,K^+}(z))}.
\end{multline*}
\end{proposition}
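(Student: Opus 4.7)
The plan is to exploit the full symmetry of the constant-assignment case to reduce the matrix identities of Lemma (2.3) and Proposition (2.4) to scalar identities. First I would observe that under the constant choice $M_n=M$, $\mathcal A^-_{n,n'}=[1,K^-]$, $\mathcal A_{n,n'}=[1,K]$, $\mathcal A^+_{n,n'}=[1,K^+]$, the matrices $A^-, A, A^+$ are scalar multiples of the $N\times N$ all-ones matrix $J$, namely $A^-=K^-J$, $A=KJ$, $A^+=K^+J$. Moreover, the cyclic symmetry of the index set $[0,N)$ that these choices exhibit forces all the generating functions $g_n((M)_{0\le n<N},K^-,K,K^+)$ to coincide with a common value $g=g(M,K^-,K,K^+)(z)$, so $G=gI$.

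Next I would substitute into the identity of Lemma (2.3). Since $J^2=NJ$, one has
\[
(I-GA)^{-1}=(I-gKJ)^{-1}=I+\frac{gK}{1-gKN}\,J,
\]
and a short calculation yields
\[
A^-(I-GA)^{-1}GA^+=\frac{gK^-K^+N}{1-gKN}\,J.
\]
Taking the $(n,n)$-entry and adding the scalar $M$ contribution from the length-two words $\alpha^-(n,m)\alpha^+(n,m')$ gives
\[
g=z^2\!\left(M+\frac{gK^-K^+N}{1-gKN}\right).
\]
Clearing denominators produces the quadratic
\[
KN\,g^{2}-\bigl(1+(MK-K^-K^+)Nz^{2}\bigr)g+Mz^{2}=0,
\]
and the claimed formula for $g(M,K^-,K,K^+)(z)$ follows by the quadratic formula, with the sign in front of the radical fixed by the requirement that $g$ be a power series vanishing at $z=0$ (whose leading term must equal $Mz^2$, the count of length-two words in the code).

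For the zeta function I would substitute the same three scalar-multiple-of-$J$ matrices into the formula of Proposition (2.4). The spectrum of $J$ consists of $N$ (simple) and $0$ (with multiplicity $N-1$), so for any scalar $\alpha$ one has $\det(I-\alpha J)=1-\alpha N$. Applying this to each of $\det(I-GA)$, $\det(I-A^-z-GA)$, and $\det(I-A^+z-GA)$ reduces the determinant ratio to the scalar expression
\[
\zeta_{X_{M,K^-,K,K^+}}(z)=\frac{1-KN\,g}{(1-K^-Nz-KN\,g)(1-K^+Nz-KN\,g)},
\]
which is the second assertion.

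The calculation is essentially algebraic, and the only real subtlety is the correct choice of sign in the quadratic formula: I would settle it by checking that the minus-sign branch is the one with $g(0)=0$ and with the correct positive Taylor coefficients in $z^2$, matching the combinatorial interpretation of $g$ as a generating function of a nonempty code.
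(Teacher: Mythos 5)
Your proposal is correct and follows essentially the same route as the paper: the paper's proof likewise specializes Lemma 2.3 to obtain the scalar functional equation $g = z^2\bigl(M + \tfrac{K^-NK^+g}{1-NKg}\bigr)$ and then invokes Proposition 2.4 for the zeta function. You merely supply the details the paper leaves implicit (the reduction of the matrices to multiples of the all-ones matrix, the resolution of the quadratic with the sign fixed by $g(0)=0$, and the determinant evaluation $\det(I-\alpha J)=1-\alpha N$), all of which are sound.
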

\begin{proof} 
By Lemma 2.3
$$
g(z) = z^2(M + \frac{K^-NK^+g(M, K^-, K ,K^+)(z)}{1-NKg(M, K^-, K ,K^+)(z)}).
$$
Also apply Proposition 2.4.
\end{proof}

 \section{The case $N = 2$}
 We consider the case $N =2$. 
 \begin{theorem} Let there be given matrices 
 $$
 A^-= 
 ( A^-_{\delta, \delta^{\prime}}  )
 _{\delta, \delta^{\prime} \in \{0,1  \}}, \
  A= 
 ( A_{\delta, \delta^{\prime}}  )
 _{\delta, \delta^{\prime} \in \{0,1  \}}, \
  A^+= 
 ( A^+_{\delta, \delta^{\prime}}  )
 _{\delta, \delta^{\prime} \in \{0,1  \}},
 $$ 
 with entries in a commutative ring, such that
\begin{align*}
( A^-A^+)_{0,0} =( A^-A^+)_{1,1} , \tag {3.1}
\end{align*}
and
\begin{align*}
( A^-AA^+)_{0,0} =( A^-AA^+)_{1,1} . \tag {3.2}
\end{align*}
 Then
 \begin{align*}
 ( A^-A^kA^+)_{0,0} =( A^-A^kA^+)_{1,1}, \quad k \in \Bbb N. \tag {3.3}
\end{align*}
\end{theorem}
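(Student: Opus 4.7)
The plan is to reduce the infinite family of equalities to the two given ones by invoking the Cayley--Hamilton theorem for $2\times 2$ matrices. Over any commutative ring, a $2\times 2$ matrix $A$ satisfies its own characteristic polynomial, so
\begin{equation*}
A^{2} = (\tr A)\, A - (\det A)\, I,
\end{equation*}
and hence, multiplying by $A^{k-2}$,
\begin{equation*}
A^{k} = (\tr A)\, A^{k-1} - (\det A)\, A^{k-2}, \qquad k \geq 2.
\end{equation*}
This identity involves only scalars and powers of $A$, so it survives sandwiching between $A^{-}$ on the left and $A^{+}$ on the right.

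Set $M_{k} = A^{-} A^{k} A^{+}$ and $d_{k} = (M_{k})_{0,0} - (M_{k})_{1,1}$. Left- and right-multiplying the Cayley--Hamilton recursion yields
\begin{equation*}
M_{k} = (\tr A)\, M_{k-1} - (\det A)\, M_{k-2}, \qquad k \geq 2,
\end{equation*}
and taking diagonal entries and subtracting gives the scalar recursion
\begin{equation*}
d_{k} = (\tr A)\, d_{k-1} - (\det A)\, d_{k-2}, \qquad k \geq 2.
\end{equation*}

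Hypothesis (3.1) says $d_{0} = 0$ and hypothesis (3.2) says $d_{1} = 0$, so a straightforward induction on $k$ propagates the vanishing of $d_{k}$ to every $k \in \mathbb{N}$, which is exactly (3.3). There is no real obstacle: the only point requiring any care is that Cayley--Hamilton is being applied over an arbitrary commutative ring rather than a field, but this is standard (it can be verified directly in the $2\times 2$ case by expanding $A^{2} - (\tr A)\, A + (\det A)\, I$ entrywise).
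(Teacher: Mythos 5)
Your proof is correct, and it takes a genuinely different and substantially cleaner route than the paper. The paper proves (3.3) by a one-step induction carried out as a direct entrywise expansion of $(A^-A^{k+1}A^+)_{0,0}$, followed by a lengthy regrouping of the eight resulting products so that the hypotheses (3.1), (3.2) and the inductive assumption can be used to swap $0$-indexed for $1$-indexed terms; the displayed computation runs over two pages and is hard to audit (it even contains visible typographical slips). Your argument replaces all of that with the Cayley--Hamilton identity $A^{2}=(\tr A)A-(\det A)I$, valid over any commutative ring and checkable by a four-entry computation, which yields the second-order linear recursion $M_{k}=(\tr A)M_{k-1}-(\det A)M_{k-2}$ for $M_{k}=A^{-}A^{k}A^{+}$ and hence $d_{k}=(\tr A)d_{k-1}-(\det A)d_{k-2}$ for the diagonal differences; the two hypotheses are exactly the two initial conditions $d_{0}=d_{1}=0$ needed to start the (two-step) induction. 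Besides being shorter, your approach explains \emph{why} precisely two conditions suffice --- the characteristic polynomial of a $2\times 2$ matrix has degree two --- and it generalizes immediately: for $N\times N$ matrices the same argument shows that the $N$ equalities $(A^-A^jA^+)_{0,0}=(A^-A^jA^+)_{1,1}$, $0\le j<N$, imply them for all powers. The only point deserving explicit mention, which you do address, is that Cayley--Hamilton holds over an arbitrary commutative ring; your direct verification disposes of that. One small presentational remark: state explicitly that the induction is on two preceding values (strong induction), since the recursion reaches back two steps.
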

\begin{proof}
The proof is by induction. Assume that (3.1) and (3.2) hold, let $k> 1$ and assume that 
\begin{align*}
( A^-A^kA^+)_{0,0} =( A^-A^kA^+)_{1,1}.
\end{align*}
Then
$$
( A^-A^{k+1}A^+)_{0,0} =
$$
$$
 A^-(0, 0)A(0, 0)A^k(0, 0)A^+(0, 0)+ A^-(0, 0)A(0, 1)A^k(1, 0)A^+(0, 0)+
$$
$$
 A^-(0, 0)A(0, 0)A^k(0, 1)A^+(1, 0)+ A^-(0, 0)A(0, 1)A^k(1, 1)A^+(1, 0)+
$$
$$
A^-(0, 1)A^k(1, 0)A(0, 1)A^+(1, 0)+A^-(0, 1)A^k(1, 1)A(1, 1)A^+(1, 0)+
$$
$$
A^-(0, 1)A^k(1, 0)A(0, 0)A^+(0, 0)+A^-(0, 1)A^k(1, 1)A(1, 0)A^+(0, 0) =
$$
\begin{align*}
 A(0, 0)\{&A^-(0, 0)A^k(0, 0)A^+(0, 0) + \\
               &A^-(0, 0)A^k(0, 1)A^+(1, 0) + \\
               &A^-(0, 1)A^k(1, 0)A^+(0, 0)\} + 
\end{align*}
\begin{align*}
A(0, 1)A^k(1, 0)\{A^-(0, 0)A^+(0, 0)+ A^-(0, 1)A^+(1, 0) \}+      
\end{align*}
\begin{align*}
A^k(1, 1)\{&A^-(0, 0)A(0, 1)A^+(1, 0) + \\
                 &A^-(0, 1)A(1, 1)A^+(1, 0)\} \\
                 &A^-(0, 1)A(1, 0)A^+(0, 0)\} =
\end{align*}
\begin{align*}
A(0, 0)\{&A^-(1, 1)A^k(1, 1)A^+(1, 1)+ \\
             &A^-(1, 1)A^k(1, 0)A^+(0, 1)+ \\
             &A^-(1, 0)A^k(0, 1)A^+(1, 1)+ \\
             &A^-(1, 0)A^k(0, 0)A^+(0, 1)- \\
             &A^-(0, 1)A^k(1, 1)A^+(1, 0)\} +
\end{align*}             
\begin{align*}
A(0, 1)A^k(1, 0)\{A^-(1, 1)A^+(1, 1)+ A^-(1, 0)A^+(0, 1) \}+ 
\end{align*}
\begin{align*}
A^k(1, 1)\{&-A^-(0, 0)A(0, 0)A^+(0, 0)+ \\
                 &A^-(1, 1)A(1, 1)A^+(1, 1)+ \\
                 &A^-(1, 1)A(1, 0)A^+(0, 1)+ \\
                 &A^-(1, 0)A(0, 1)A^+(1, 1)+ \\
                 &A^-(1, 0)A(0, 0)A^+(0, 1)\} =
\end{align*}
$$
 A^-(1, 1)A^k(1, 1)A(1, 1)A^+(1, 1)+ A^-(1, 1)A^k(1, 0)A(0, 1)A^+(1, 1)+
$$
$$
 A^-(1, 1)A^k(1, 1)A(1, 0)A^+(0, 1)+ A^-(1, 1)A^k(1, 0)A(0, 0)A^+(0, 1)+
$$
$$
A^-(1, 0)A(0, 1)A^k(1, 0)A^+(0, 1)+A^-(1, 0)A(0, 0)A^k(0, 0)A^+(0, 1)+
$$
$$
A^-(1, 0)A(0, 1)A^k(1, 1)A^+(1, 1)+A^-(1, 0)A(0, 0)A^k(0, 1)A^+(1, 1) +
$$
\begin{align*}
A(0, 0)A^k(1, 1)\{ &A^-(1, 1)A^+(1, 1) + A^-(1, 0)A^+(1, 0)-\\
&A^-(0, 0)A^+(0, 0) - A^-(0, 1)A^+(0, 1) \}   =
\end{align*}
\begin{align*}
( A^-A^{k+1}A^+)_{1,1}.\qed
\end{align*}
\renewcommand{\qedsymbol}{}
\end{proof}
One checks that (3.3)  holds  for matrix triples   $A^-   , A  , A^+  $  that have one of the following forms
($\star$), ($\star$$\star$) or ($\star$$\star$$\star$):
\begin{align*}
A^- =
\begin{pmatrix}
A^-_{0} & A^-_{1} \\
A^-_{1}  & A^-_{0} 
\end{pmatrix}, 
\ \ A =
\begin{pmatrix}
A_{0} & A_{1} \\
A_{1}  & A_{0} 
\end{pmatrix},
\ \ A^+ =
\begin{pmatrix}
A^+_{0} & A^+_{1} \\
A^+_{0}  & A^+_{0}  
\end{pmatrix},  \tag {$\star$}
\end{align*}
\begin{align*}
A^- =
\begin{pmatrix}
A^-_{0} & A^-_{1} \\
A^-_{0}  & A^-_{1} 
\end{pmatrix}, 
\ \ A =
\begin{pmatrix}
A(0,0) & A(0,1) \\
A(1,0)  & A(1,1) 
\end{pmatrix},
\ \  \quad A^+ =
\begin{pmatrix}
A^+_{0} & A^+_{0} \\
A^+_{1}  & A^+_{1} 
\end{pmatrix},  \tag {$\star$$\star$}
\end{align*}
\begin{align*}
A^- =
\begin{pmatrix}
B_{0} & B_{1} \\
B_{2}  & B_{3} 
\end{pmatrix}, 
\ \ A =
\begin{pmatrix}
A_0 & A_1 \\
A_1  & A_2
\end{pmatrix},
\ \  \quad A^+ =
\begin{pmatrix}
B_2 & B_0 \\
B_3  & B_1 
\end{pmatrix}.
 \tag {$\star$$\star$$\star$}
\end{align*}

Denote the characteristic polynomial of a matrix $A$ by $\chi_A$.
 
 \begin{theorem}
 Let there be given an $M \in \Bbb N$, and matrices
 $$
 A^-= 
 ( A^-_{\delta, \delta^{\prime}}  )
 _{\delta, \delta^{\prime} \in \{0,1  \}}, \
  A= 
 ( A_{\delta, \delta^{\prime}}  )
 _{\delta, \delta^{\prime} \in \{0,1  \}}, \
  A^+= 
 ( A^+_{\delta, \delta^{\prime}}  )
 _{\delta, \delta^{\prime} \in \{0,1  \}},
 $$ 
 with entries in $\Bbb Z_+$, such that
  $$
  A^-(\delta, \delta^{\prime}),A(\delta, \delta^{\prime}) ,A^+(\delta, \delta^{\prime})  \leq M, \qquad \delta, \delta^{\prime} \in \{0,1  \}.
  $$
  Set
 $$
 g_0= \sum_{k\in \Bbb Z_+} g_0(k)z^k = g_0((M, M), A^-, A,A^+) , 
 $$
 $$
  g_1= \sum_{k\in \Bbb Z_+} g_1(k)z^k= g_1((M, M), A^-, A,A^+).
 $$
 Assume that
  \begin{align*}
 g_0(4) = g_1(4),  \tag {3.4}
  \end{align*}
  and
    \begin{align*}
 g_0(6) = g_1(6).  \tag {3.5}
  \end{align*}
  Then
    \begin{align*}
(A^-A^+)_{0,0} = (A^-A^+)_{1,1},  \tag {3.6}
  \end{align*}
  and
    \begin{align*}
 (A^-AA^+)_{0,0} = (A^-AA^+)_{1,1},  \tag {3.7}
  \end{align*}
and also
 \begin{align*}
 g_0 = g_1, \tag {3.8}
 \end{align*}
 and, denoting the  common value of $(A^-A^+)_{0,0}$ and $(A^-A^+)_{1,1}$ by $\eta(4)$ and the  common value of $(A^-AA^+)_{0,0}$ and $(A^-AA^+)_{1,1}$ by $\eta(6)$,  the common value $g$ of  $g_0$ and  $g_1$ satisfies the equation
 \begin{align*}
g(z) = z^2\{M + \frac{1}{g(z)^2\chi_A(g(z)^{-1})}[\eta(4) g(z) + (\eta(6) - \eta(4)\tr A)g(z)^2   ]\}.
\tag {3.6}
 \end{align*}
 \end{theorem}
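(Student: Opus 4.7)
The plan is to derive $(3.6)$ and $(3.7)$ by matching the coefficients of $z^{4}$ and $z^{6}$ in the identity of Lemma $2.3$, bootstrap to $(3.8)$ by induction on the order of $z$ via Theorem $3.1$, and finally obtain the functional equation for $g$ by combining $g_{0}=g_{1}=g$ with the Cayley--Hamilton identity for the $2\times 2$ matrix $A$.

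Concretely, expanding the formula of Lemma $2.3$ as a geometric series gives
$$
g_{n}\;=\;z^{2}\Bigl(M+\sum_{k\ge 0}\bigl(A^{-}G(AG)^{k}A^{+}\bigr)_{n,n}\Bigr),
$$
with $G=\operatorname{diag}(g_{0},g_{1})$. From $g_{n}(2)=M$ one reads off $g_{n}(4)=M(A^{-}A^{+})_{n,n}$, so $(3.4)$ yields $(3.6)$. At order $z^{6}$ the only surviving contributions are $(A^{-}[z^{4}]G\,A^{+})_{n,n}$ and $M^{2}(A^{-}AA^{+})_{n,n}$, so combining $(3.6)$ with $g_{0}(4)=g_{1}(4)$ reduces $(3.5)$ to $M^{2}\bigl[(A^{-}AA^{+})_{0,0}-(A^{-}AA^{+})_{1,1}\bigr]=0$, which is $(3.7)$. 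Theorem $3.1$ then gives $\eta_{k}:=(A^{-}A^{k}A^{+})_{n,n}$ independent of $n$ for every $k\ge 0$. For $(3.8)$ I would prove $g_{0}(2I)=g_{1}(2I)$ by induction on $I$: from the series expansion above, only the terms with $k\le I-2$ contribute to the coefficient $[z^{2I}]g_{n}$, and each factor $g_{i_{\ell}}$ occurring there is evaluated at an order strictly less than $2I$. The inductive hypothesis therefore makes $[z^{2I-2}](g_{i_{0}}\cdots g_{i_{k}})$ independent of the indices $(i_{0},\dots,i_{k})$, after which the sum over indices collapses to a scalar multiple of $(A^{-}A^{k}A^{+})_{n,n}$, which Theorem $3.1$ renders independent of $n$.

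For the functional equation, with $g_{0}=g_{1}=g$ one has $G=g\mathbf{1}$ and $(\mathbf{1}-gA)^{-1}=\sum_{k\ge 0}g^{k}A^{k}$, so the right-hand side of Lemma $2.3$ becomes $z^{2}(M+S)$ with $S=\sum_{k\ge 0}g^{k+1}\eta_{k}$. The Cayley--Hamilton identity $A^{2}=(\tr A)A-(\det A)\mathbf{1}$, sandwiched between $A^{-}$ and $A^{+}$, yields the recurrence $\eta_{k}=(\tr A)\eta_{k-1}-(\det A)\eta_{k-2}$ for $k\ge 2$, which converts to the algebraic identity $S\bigl[1-(\tr A)g+(\det A)g^{2}\bigr]=\eta_{0}g+(\eta_{1}-(\tr A)\eta_{0})g^{2}$. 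Since $1-(\tr A)g+(\det A)g^{2}=g^{2}\chi_{A}(g^{-1})$ and $(\eta_{0},\eta_{1})=(\eta(4),\eta(6))$, this rearranges into the displayed formula. The main delicacy is the inductive step for $(3.8)$: one must keep careful track of how the order bound $k\le I-2$ forces every $g_{i_{\ell}}$-factor in the convolutions to be evaluated below order $2I$, which is precisely what permits the inductive hypothesis to be invoked before Theorem $3.1$ is applied.
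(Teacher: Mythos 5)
Your proposal is correct and follows essentially the same route as the paper: extract $(3.6)$ and $(3.7)$ by matching the $z^4$ and $z^6$ coefficients of the expansion from Lemma 2.3, prove $(3.8)$ by induction on the order using Theorem 3.1 applied to $(A^-A^kA^+)_{\delta,\delta}$, and substitute $g_0=g_1=g$ back into Lemma 2.3 to get the functional equation. The only (harmless) difference is cosmetic: for the last step you sum the Neumann series and use Cayley--Hamilton to telescope $\sum_k \eta_k g^{k+1}$, whereas the paper inverts the $2\times 2$ matrix $\mathbf 1-GA$ explicitly and then sets $g_0=g_1$.
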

 \begin{proof}
 It is 
  \begin{align*}
 g_0(2) =g_1(2) = M. 
  \end{align*}
 From (3.4)
   \begin{align*}
M(A^-A^+)_{0,0}=g(4)=M(A^-A^+)_{1,1},
   \end{align*}
 which is  (3.6). From (3.5)
   \begin{align*}
\eta(4) g(4)+ M^2 (A^-AA^+)_{0,0}=g(6)=\eta(4) g(4)+ M^2(A^-AA^+)_{1,1},
   \end{align*}
 which is (3.7).
 The proof of (3.8) is now by induction. Let $k\geq 3$, and let
 \begin{align*}
 g_0 (2q)= g_1(2q), \quad 3 < q\leq k. 
 \end{align*}
 Denote the common value of $g_0(2q)$ and $g_1(2q)$ by $g(2q), 2 \leq q \leq k$.
It is by Lemma (3.1)
   \begin{align*}
 g_{\delta_\circ} (2(k+1)) =&
 \sum_{ \delta, \delta^{\prime} \in \{0,1  \}, 1 \leq q \leq k} 
 A^-(\delta_\circ, \delta) A^q(\delta, \delta^{\prime}) A^+( \delta^{\prime},\delta_\circ) \\
& [ \sum_{\{(s(r))_{1\leq r\leq q}\in \Bbb N^{[1, q]}: \sum_{1\leq r\leq q} s(r) =k\}} \
  \prod_{1 \leq r \leq q}        g(2s(r))                      ],
\quad \delta_\circ \in\{0,1  \},
  \end{align*}
 which is (3.8).
 
One has from Lemma 2.3 that
\begin{multline*}
g_\delta =
z^2\{M  + \\
\frac{1}{1 - A(0,0)g_0  - A(1,1)g_1   +  g_0 g_1\det A}
[A^- (\delta, \delta)(1-g _{\delta^\prime} A  (\delta^\prime, \delta^\prime))g_\delta A^+  (\delta, \delta) + \\
A^- (\delta, \delta^\prime)g _{\delta^\prime} A (\delta^\prime, \delta) g_\delta A^+ (\delta, \delta )+ 
A^- (\delta, \delta)g_\delta A (\delta, \delta^\prime)g_{\delta^\prime} A^+ (\delta^\prime, \delta)+\\
A^- (\delta, \delta^\prime) (1-g_\delta A (\delta, \delta))g_{\delta^\prime} A^+ (\delta^\prime, \delta) ]      \},  \quad
(\delta,  \delta^\prime ) = (0,1), (1,0).
\end{multline*}
Apply (3.8).
\end{proof}
Setting
\begin{align*}
& B(z) = -\tr A - [M \det A +\eta(6) - \eta(4) \tr A]z^2, \\
& C(z) = 1 + [M \tr A - \eta(4)]z^2Ê\qquad
 D(z) = - Mz^2,
 \end{align*}
 one has from  equation (3.6) for the case that $\det A = 0$, that 
  \begin{align*}
g = \frac{1}{2B}(-C+  \sqrt{C^2-4BD}), \tag {3.7}
  \end{align*}
  and, also setting
  \begin{align*}
  I = B^2 - 3C \det A, \qquad
  J =9BC\det A - 27 D \det A^2 - 2B^3,
 \end{align*}
   we find by the formulas of Cardano and Vieta,  from equation (3.1) for the case that $\det A \neq 0$, and $\chi_A$ has real roots, that 
     \begin{align*}
   g = \frac{1}{3 \det A}( - B + \sqrt [3]{\tfrac{1}{2}(J + \sqrt{J^2 - 4I^2})}  +  \sqrt [3]{\tfrac{1}{2}(J - \sqrt{J^2 - 4I^2})}    ) , \tag  {3.8}
     \end{align*}
and  for the case that $\det A \neq 0$, and  $\chi_A$ has complex roots, that 
     \begin{align*}
  g =  \frac{1}{3 \det A}( - B  +2\sqrt{I} \cos (\tfrac{1}{3}(2\pi + \arccos \frac{J}{2I\sqrt{I}})). \tag {3.9}
    \end{align*}
    
\section{Excluding words from  $D_2$}

We look now more closely at sets  $\mathcal F$ of $D_2$-admissible words of lenth two, that we describe  by 0\thinspace-1 matrices 
$$
A^-_\mathcal F =
\begin{pmatrix}
A^-_\mathcal F(0,0) & A^-_\mathcal F(0,1) \\
A^-_\mathcal F(1,0)  & A^-_\mathcal F(1,1) 
\end{pmatrix}, 
$$
$$
 A_\mathcal F =
\begin{pmatrix}
A_\mathcal F(0,0) & A_\mathcal F(0,1) \\
A_\mathcal F(1,0)  & A_\mathcal F(1,1) 
\end{pmatrix},
$$
$$
 A^+ _\mathcal F=
\begin{pmatrix}
A^+_\mathcal F(0,0) & A^+_\mathcal F(0,1) \\
A^+_\mathcal F(1,0)  & A^+_\mathcal F(1,1)
\end{pmatrix},
$$
where
\begin{align*}
\mathcal F = &\{\alpha^-(\delta)\alpha^-( \delta^\prime):  \delta,  \delta^\prime \in \{0,1 \}, A^-_\mathcal F( \delta, \delta^\prime) = 0 \}
 \  \cup \\
& \{  \alpha^+(\delta)\alpha^-( \delta^\prime):   \delta,  \delta^\prime \in \{0,1 \}, A_\mathcal F( \delta, \delta^\prime) = 0  \}  
 \  \cup \\
&  \{  \alpha^+(\delta)\alpha^+( \delta^\prime):  \delta,  \delta^\prime \in \{0,1 \},  A^+_\mathcal F( \delta, \delta^\prime) = 0  \} .
\end{align*}
The subshift that is obtained by removing the words in $\mathcal F$ from $D_2$ is identical to the subshift $X(1,1, A_\mathcal F^-, A_\mathcal F,A_\mathcal F^+)$. 

We are interested in the set $\mathcal T$ of matrix triplets 
$(A^-_\mathcal F,A_\mathcal F,A^+_\mathcal F) $
such that 
$\mathcal L_2(D_2)\setminus  \mathcal F $ is the vertex set of a $D_2$-presentation, and such that
 $g_0(1,1, A_\mathcal F^-, A_\mathcal F,A_\mathcal F^+ )$ is equal to
  $g_1(1,1, A_\mathcal F^-, A_\mathcal F,A_\mathcal F^+ )$. The set  $\mathcal L_2(D_2)\setminus  \mathcal F $ is the vertex set of a $D_2$-presentation precisely if
 each row of $A^-_\mathcal F$ is non-zero, $A_\mathcal F$ is irreducible, and each column of $A^+_\mathcal F$ is non-zero.
The edge set of the $D_2$-presentation is then the set of words in $\mathcal L_3(D_2)$ with a prefix in $\mathcal L_2(D_2)\setminus  \mathcal F $, which acts as the initial vertex of the edge, and a suffix in  $\mathcal L_2(D_2)\setminus  \mathcal F $, which acts as the final vertex of the edge.
The label map $\lambda$ is given by
\begin{align*}
\lambda(\beta_{-1}\beta_0\beta_{1})=\begin{cases}
\beta_{-1}, &\text {if $ \beta_{-1}\beta_0\in \mathcal S^- \setminus \{\bold 1  \}$,}\\
\beta_{1}, &\text {if \  $ \beta_{0}\beta_1\in \mathcal S^+ \setminus \{\bold 1  \}$,}\\
\bold 1, &\text {otherwise,}
\end{cases} \qquad      \beta_{-1} \beta_0\beta_{1}\in \mathcal L(D_2).
\end{align*}

Dyck shifts have a time reversal, by which is meant a topological conjugacy between the subshift and its inverse. For $D_2$ the time reversal $T$ is given by 
\begin{align*}
T(x)_i  = \begin{cases}
\alpha_+(0), &\text {if $ x_{-i} = \alpha_-(0)$,}\\
\alpha_+(1), &\text {if $x_{-i} = \alpha_-(1)$,}\\
\alpha_-(0), &\text {if $x_{-i} = \alpha_+(0)$,}\\
\alpha_-(1), & \text{if $x_{-i} = \alpha_+(1)$,}        
\end{cases} \qquad  x = (x_i)_{i \in \Bbb Z} \in D_2.
\end{align*}
It is 
$$
T(X(1, 1,A_\mathcal F^-, A_\mathcal F^-,A_\mathcal F^+  )) = X(1, 1,   (A_\mathcal F^-)^T,  A_\mathcal F^T,(A_\mathcal F^+)^T).
$$

As an application of theorem (3.2) we  list the triplets in the set $\mathcal T$ that are neither of the form ($\star$) nor of the form 
($\star$$\star$) nor ($\star$$\star$$\star$),  choosing a representative out of every set of triplets that can be obtained from one another by exchanging the indices 0 and 1 and/or by time reversal:
 For
 \begin{align*}
 \ \ A_\mathcal F =
\begin{pmatrix}
1 & 1 \\
1 & 0 
\end{pmatrix},
 \end{align*}
we   take
\begin{align*}
A_\mathcal F^- =
\begin{pmatrix}
1 & 1 \\
1 & 0
\end{pmatrix}, 
\ \ A_\mathcal F^+ =
\begin{pmatrix}
0& 1\\
1 & 0
\end{pmatrix},  
\end{align*}
and for
\begin{align*}
 A_\mathcal F =
\begin{pmatrix}
0 & 1 \\
1  & 0 
\end{pmatrix},
\ \ A_\mathcal F =
\begin{pmatrix}
1 & 1 \\
1  & 1 
\end{pmatrix},
\end{align*}
we take
\begin{align*}
A_\mathcal F^- =
\begin{pmatrix}
1 & 1 \\
1 & 1
\end{pmatrix}, 
\ \ A_\mathcal F^+ =
\begin{pmatrix}
1 & 0\\
0 & 1 
\end{pmatrix},
\end{align*}
\begin{align*}
A_\mathcal F^- =
\begin{pmatrix}
1 & 1 \\
0 & 1
\end{pmatrix}, 
\ \ A_\mathcal F^+ =
\begin{pmatrix}
1& 1\\
0 & 1 
\end{pmatrix}, 
\end{align*}
\begin{align*}
A_\mathcal F^- =
\begin{pmatrix}
1 & 1 \\
1 & 1
\end{pmatrix}, 
\ \ A_\mathcal F^+ =
\begin{pmatrix}
0 & 1 \\
1 & 0
\end{pmatrix}, 
\end{align*}
\begin{align*}
A_\mathcal F^- =
\begin{pmatrix}
1 & 1 \\
1 & 0
\end{pmatrix}, 
\ \ A_\mathcal F^+ =
\begin{pmatrix}
0 & 1 \\
1 & 1
\end{pmatrix}. 
\end{align*}

The zeta functions of the periodic points with negative and with positive multipliers, and of the neutral periodic points of the  $\mathcal D_2$-presentations that arise from the triplets in the set $\mathcal A$ can be obtained from Proposition 2.5 using the formulas (3.7),  (3.8) and (3.9). In a number of cases they can also be determined by direct inspection without the use of circular codes that are properly Markov, that is, without recourse to Lemma 2.1.

\medskip

 \end{document}